\newtheorem{theorem}{Theorem}[section]
\newtheorem{corollary}[theorem]{Corollary}
\newtheorem{definition}[theorem]{Definition}
\newtheorem{example}[theorem]{Example}
\newtheorem{lemma}[theorem]{Lemma}
\newtheorem{remark}[theorem]{Remark}
\newtheorem{question}[theorem]{Question}
\newtheorem{nota}[theorem]{}
\def\Lip{\mathrm{Lip}}
\def\to{\rightarrow}
\def\R{\mathbb R}
\def\N{\mathbb N}
\def\e{\varepsilon}
\begin{document}

\title{When is $U(X)$ a ring?}

\author{Javier Cabello S\'anchez}
\newcommand{\AuthorNames}{J. Cabello S\'anchez}

\newcommand{\FilMSC}{Primary 54C10; Secondary 54C30, 54E40, 54D20} 
\newcommand{\FilKeywords}{Rings of uniformly continuous functions, 
Bourbaki-boundedness, uniform isolation. }
 \newcommand{\FilSupport}
{The author was partially supported by MINECO (Ministerio de Econom\'ia y 
Competitividad) projects MTM2010-20190-C02-01 and MTM2013-45643-C02-01-P 
and by the Junta de Extremadura project GR10113. }

\maketitle

\begin{abstract}
In this short paper, we will show that the space of real valued uniformly 
continuous functions defined on a metric space $(X,d)$ is a ring if and only 
if every subset $A\subset X$ has one of the following properties: 

\begin{itemize}
\item $A$ is Bourbaki-bounded, i.e., every uniformly continuous function 
on $X$ is bounded on $A$. 
\item $A$ contains an infinite uniformly isolated subset, i.e., there exist $\delta>0$ 
and an infinite subset $F\subset A$ such that $d(a,x)\geq \delta$ for every 
$a\in F, x\in X$. 
\end{itemize}
\end{abstract}

\makeatletter
\renewcommand\@makefnmark
{\mbox{\textsuperscript{\normalfont\@thefnmark)}}}
\makeatother

\section*{Introduction}

Even when it is quite surprising, it seems that there is no characterization of 
metric spaces whom real valued uniformly continuous functions have ring structure. 
Everybody knows that $C(X)$ is a ring whenever $X$ is a topological space, as well 
as $\Lip(X)$ is a ring if and only if $X$ has finite diametre. The main result in 
this paper solves this lack of information about $U(X)$, but with a somehow disgusting 
statement. Recall the Atsuji characterization of {\em UC} spaces, where 
$X'$ is the set of accumulation points in $X$ (\cite{A}): 

\vspace{3 mm}

\noindent{\bf Theorem }(\cite{A}){\bf .}
{\em Let $X$ be a metric space. Then, $U(X)=C(X)$ if and only if $X'$ is compact 
and every closed $I\subset X\setminus X'$ is uniformly isolated.} 

\vspace{3 mm}

An alternative way to state this result is: 

\vspace{3 mm}

\noindent{\bf Theorem.}
{\em Let $X$ be a metric space. Then, $U(X)=C(X)$ if and only if $X'$ is compact and, 
for every $\delta>0$, the set $I=\{x\in X:d(x,X')\geq \delta\}$ is uniformly isolated.}

\vspace{3 mm}

Our ideal statement would be entirely analogous to the second one, writing 
``$X'$ is Bourbaki-bounded" instead of ``$X'$ is compact", but we cannot 
ensure that $X\setminus X'$ will have this property. Namely, we have found two 
main problems. The first one is that there may be some sequences near $X'$ formed 
by isolated points which do not affect to its Bourbaki-boundedness --we will 
put an example (see \ref{ejBb}). This could be solved by changing $X'$ by another 
Bourbaki-bounded $A$, and stating it as follows: 

\vspace{3 mm}

\noindent{\bf Conjecture.}
{\em Let $X$ be a metric space. Then, $U(X)$ is a ring if and only if there exists a
Bourbaki-bounded subset $A$ such that 
for every $\delta>0$, the set $\{x\in X:d(x,A)\geq \delta\}$ is uniformly isolated.}

\vspace{3 mm}

The second problem is that we have not been able to show that this is true, 
we have just shown that if there exists such an $A$, then $U(X)$ is a ring 
--even this seems to be new since, in the recent paper \cite{N}, it is
shown that $U(X)$ is a ring whenever 
$X=F\cup I$, where $F$ is Bourbaki-bounded and $I$ uniformly isolated. 

The main problem in this paper has been widely studied (see, e.g., \cite{A}, 
\cite{N}, \cite{NZ}) along with problems about characterizing the couples 
of uniformly continuous functions whom product remains uniformly continuous 
(again \cite{A}, but also \cite{BN}, \cite{C}). 

\section{Bourbaki-bounded (sub)spaces}

We are about to explain some properties of Bourbaki-bounded spaces before coming up to 
the main result. 

\begin{nota}
Given a metric space $X$, we will denote by $U(X)$ the set of real valued, 
uniformly continuous functions defined on $X$. The subset of bounded 
functions in $U(X)$ will be denoted by $U^*(X)$. 
Recall that $U^*(X)$ is always a ring. 
\end{nota}

\begin{definition} \rm
A metric space $X$ is said to be {\em Bourbaki-bounded} if $U(X)=U^*(X)$. 
$A\subset X$ is a Bourbaki-bounded subset if the map 
$f_{|A}:A\to\R, f_{|A}(x)=f(x)$ is bounded for every $f\in U(X)$. 
\end{definition}

\begin{nota}
Given $A\subset X$ and $\gamma>0$, we will denote by $A^\gamma$ the set 
of points whom distance to $A$ is not greater than $\gamma$: 
$A^{1, \gamma}= A^\gamma=\{x\in X:d(x, A)\leq\gamma\}$. Inductively, 
$A^{k, \gamma}=\{x\in X:d(x, A^{k-1, \gamma})\leq\gamma\}$
\end{nota}

\begin{nota}
$\{U_j:j\in J\}$ is a {\em uniform} covering of $X$ whenever there 
exists $\e>0$ such that, for every $x\in X$ there is $j\in J$ such that 
$B(x,\e)\subset U_j$. 
\end{nota}

\begin{nota}
A covering $\{U_j:j\in J\}$ is said to be {\em star-finite} if 
$\{i\in J:U_i\cap U_j\neq\emptyset\}$ for every $j\in J$. 
\end{nota}

Let us recall some characterizations of Bourbaki-bounded spaces:

\begin{theorem}
Let $(X,d)$ be a metric space. Then, the following statements are equivalent: 
\begin{enumerate}
\item $X$ is a Bourbaki-bounded metric space. 
\item For every metric space $Y$ and every uniformly continuous function 
$f:X\to Y$, $f(X)$ is bounded in $Y$ (\cite{A}). 
\item $X$ is $d'$-bounded for every metric $d'$ uniformly equivalent to $d$ (\cite{H}). 
\item Every star-finite uniform cover of $X$ is finite (\cite{Nj}). 
\item Every countable $B\subset X$ is a Bourbaki-bounded subset in $X$ (\cite{GM}). 
\item For every $\gamma>0$, there exist $M_1, M_2\in\N, x_1, x_2,\ldots, 
x_{M_1}\in X$ such that $X=\{x_1, x_2,\ldots, x_{M_1}\}^{M_2, \gamma}$ (\cite{A}). 
\end{enumerate}
\end{theorem}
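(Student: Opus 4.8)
The plan is to prove the cycle $(6)\imp(2)\imp(1)\imp(6)$ together with the separate equivalences $(1)\iff(3)$, $(1)\iff(5)$ and $(1)\iff(4)$ (the last split as $(6)\imp(4)$ and $(4)\imp(6)$); all the real work sits in $(1)\imp(6)$ and $(4)\imp(6)$, the rest being soft. The implication $(2)\imp(1)$ is immediate (take $Y=\R$). For $(6)\imp(2)$, given a uniformly continuous $f:X\to Y$ I pick $\delta>0$ with $d(a,b)<\delta\imp d_Y(f(a),f(b))<1$, apply $(6)$ with $\gamma=\delta/2$ to write $X=F^{M,\gamma}$ for a finite $F$, and note that every $x$ is joined to a point of $F$ by a chain of $M$ steps each of length $<\delta$; hence $d_Y(f(x),f(F))<M$ and $f(X)\subset\bigcup_{a\in F}B_Y(f(a),M)$ is bounded. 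For $(1)\iff(3)$: if $d'$ is uniformly equivalent to $d$ then $x\mapsto d'(x,x_0)$ is the composite of the uniformly continuous identity $(X,d)\to(X,d')$ with a $1$-Lipschitz map, hence lies in $U(X)$, so $(1)$ makes it bounded and $X$ is $d'$-bounded; conversely an unbounded $f\in U(X)$ turns $d'(x,y)=d(x,y)+|f(x)-f(y)|$ into a metric uniformly equivalent to $d$ of infinite diameter. For $(1)\iff(5)$: one direction is trivial, and for the other an unbounded $f\in U(X)$ together with points $x_n$ satisfying $|f(x_n)|\ge n$ exhibit a countable subset that is not Bourbaki-bounded.

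To prove $(1)\imp(6)$ I argue the contrapositive: assume $(6)$ fails and fix $\gamma>0$ with $F^{M,\gamma}\neq X$ for every finite $F$ and every $M$; I build an unbounded member of $U(X)$, splitting into two cases. \emph{Case (i): some finite $F$ has $\bigcup_M F^{M,\gamma}=X$.} Then $\ell(x):=\min\{M\ge0:x\in F^{M,\gamma}\}$ is everywhere finite yet unbounded, and since $F^{M,\gamma}$ arises from $F^{M-1,\gamma}$ by a $\gamma$-thickening, $d(x,y)\le\gamma$ forces $|\ell(x)-\ell(y)|\le1$. I set $g(x)=(\ell(x)-1)+\tfrac1\gamma\,\dist(x,F^{\ell(x)-1,\gamma})$ when $\ell(x)\ge1$ and $g=0$ on $F$ (with $F^{0,\gamma}=F$). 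A short case analysis on whether a pair $x,y$ with $d(x,y)\le\gamma$ lies in one level set of $\ell$ or in adjacent ones — using that $\ell(x)=k\ge1$ says both $x\in F^{k,\gamma}$ and $\dist(x,F^{k-2,\gamma})>\gamma$ — yields $|g(x)-g(y)|<\tfrac2\gamma d(x,y)$ for all such pairs, so $g$ is uniformly continuous, while $g\ge\ell-1$ is unbounded. \emph{Case (ii): no finite $F$ has $\bigcup_M F^{M,\gamma}=X$.} I build $x_0,x_1,\dots$ greedily: $x_0$ arbitrary, and having chosen $x_0,\dots,x_i$ I put $W_i=\bigcup_M\{x_0,\dots,x_i\}^{M,\gamma}$ and pick $x_{i+1}\notin W_i$. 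The sets $W_i$ are nested and $\gamma$-saturated ($d(x,y)\le\gamma$ and $x\in W_i$ give $y\in W_i$), $x_i\in W_i\setminus W_{i-1}$, and $d(x_i,x_j)>\gamma$ for $i\neq j$. Then the function that equals $\min\{i:x\in W_i\}$ on $\bigcup_iW_i$ and $0$ on $X\setminus\bigcup_iW_i$ (a set lying at distance $>\gamma$ from $\bigcup_iW_i$) is constant on each $\gamma$-ball, hence uniformly continuous, and it is unbounded since it takes the value $i$ at $x_i$. This proves $(1)\imp(6)$.

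For $(4)\imp(6)$ I again argue the contrapositive from the same dichotomy. In Case (i) the ``thick shells'' $V_k=F^{k+1,\gamma}\setminus F^{k-3,\gamma}$ ($k\ge0$, $F^{j,\gamma}=\emptyset$ for $j<0$) cover $X$, are uniform with constant $\gamma/2$ (a ball $B(x,\gamma/2)$ changes level by at most one), and are star-finite ($V_k$ meets only $V_{k'}$ with $|k-k'|\le3$), and there are infinitely many of them because $\ell$ is unbounded. In Case (ii) the sets $W_i\setminus W_{i-2}$ ($i\ge0$) together with $X\setminus\bigcup_iW_i$ do the same: each $B(x,\gamma)$ stays inside one of them by $\gamma$-saturation, $W_i\setminus W_{i-2}$ meets only $W_j\setminus W_{j-2}$ with $|i-j|\le1$, and there are infinitely many since $x_i\in W_i\setminus W_{i-1}$. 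Either way $X$ carries an infinite star-finite uniform cover. Finally, for $(6)\imp(4)$, let $\{U_j\}_{j\in J}$ be a star-finite uniform cover by nonempty sets with uniformity constant $\e$; fix $j(x)$ with $B(x,\e)\subset U_{j(x)}$ and form the (locally finite) graph on $J$ with $j\sim j'$ iff $U_j\cap U_{j'}\neq\emptyset$. Since $d(x,x')<\e$ gives $x'\in U_{j(x)}\cap U_{j(x')}$, the chains supplied by $(6)$ at scale $\e/3$ place $j(X)$ within a fixed graph-distance of a finite set, so $j(X)$ is finite; as $\{U_j:j\in j(X)\}$ already covers $X$, every $U_j$ meets one of these finitely many sets, and star-finiteness then forces $J$ to be finite.

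The step I expect to be the genuine obstacle is Case (i) of $(1)\imp(6)$, namely checking that the smoothed level function $g$ is uniformly continuous: the honest integer count of $\gamma$-steps jumps by $1$ across $\gamma$-balls and is \emph{not} uniformly continuous, so one is forced to interpolate, and then one must verify — via the per-shell case analysis — that interpolation keeps the local Lipschitz constant bounded (by $2/\gamma$) without destroying unboundedness. Everything else, including the dichotomy and the two cover constructions behind $(4)\iff(6)$, is routine book-keeping once this function is in place.
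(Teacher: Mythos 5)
Your proof is correct, and it is worth saying up front that the paper does not actually prove this theorem: it is presented as a compendium of known characterizations, with each nontrivial item delegated to a citation (Atsuji for (2) and (6), Hejcman for (3), Nj\aa stad for (4), Garrido--Mero\~no for (5)). So there is no ``paper proof'' to match; what you have supplied is a self-contained argument, and it holds together. Your decomposition uses the finite-chainability condition (6) as the hub, and the two genuinely substantive steps are exactly where you said they were. In Case (i) of $(1)\Rightarrow(6)$ the interpolated level function $g(x)=(\ell(x)-1)+\frac1\gamma\,\mathrm{dist}(x,F^{\ell(x)-1,\gamma})$ does work: on a pair with $d(x,y)=\eta\le\gamma$ and $\ell(y)=\ell(x)+1=k+1$, the condition $y\notin F^{k,\gamma}$ gives $\mathrm{dist}(x,F^{k-1,\gamma})>\gamma-\eta$, hence $g(x)>k-\eta/\gamma$, while $x\in F^{k,\gamma}$ gives $g(y)\le k+\eta/\gamma$; combined with $g(x)\le k\le g(y)$ this yields the claimed bound $|g(x)-g(y)|\le 2\eta/\gamma$, and the unboundedness $g\ge\ell-1$ uses that the inclusions $F^{k-1,\gamma}\subset F^{k,\gamma}$ must all be strict once $\ell$ is unbounded. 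Case (ii) is even cleaner because the $W_i$ are $\gamma$-saturated, so the level function is literally locally constant at scale $\gamma$. The cover constructions for $(4)\Leftrightarrow(6)$ and the graph argument for $(6)\Rightarrow(4)$ are also sound, modulo two conventions you correctly made explicit: covers are taken to consist of nonempty sets (otherwise (4) is vacuously false), and since the infima in $A^{k,\gamma}=\{x:d(x,A^{k-1,\gamma})\le\gamma\}$ need not be attained, the chains must be run at a slightly smaller scale than the modulus of continuity ($\gamma=\delta/2$, $\gamma=\e/3$), which you did. What your approach buys over the paper's is a single uniform mechanism --- the dichotomy between ``$\bigcup_M F^{M,\gamma}=X$ for some finite $F$ but no finite $M$'' and ``no finite $F$ chains to everything'' --- that simultaneously produces the unbounded uniformly continuous function for $(1)\Rightarrow(6)$ and the infinite star-finite uniform cover for $\neg(4)$, rather than four independent references.
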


When $X$ fulfills any of the previous conditions, it is also said that 
$X$ is finitely chainable, because of the last condition. 

The following translations to Bourbaki-bounded subsets are easy to check: 

\begin{theorem}
Let $A\subset X$. Then, the following statements are equivalent:
\begin{enumerate}
\item $A$ is a Bourbaki-bounded subset. 
\item For every metric space $Y$ and every uniformly continuous function
$f:X\to Y$, $f(A)$ is bounded in $Y$. 
\item $A$ is $d'$-bounded for every metric $d'$ uniformly equivalent to $d$ on $X$. 
\item Every countable $B\subset A$ is a Bourbaki-bounded subset in $X$.
\item For every $\gamma>0$, there exist $M_1, M_2\in\N, x_1, x_2,\ldots, 
x_{M_1}\in X$ such that $A\subset \{x_1, x_2,\ldots, x_{M_1}\}^{M_2, \gamma}$.
\end{enumerate}

\end{theorem}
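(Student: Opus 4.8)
The plan is to derive each of the five statements for a subset $A\subset X$ from the corresponding statement of Theorem 1.6 applied to $A$ viewed as a metric subspace, being careful about the distinction between ``bounded on $A$ for functions uniformly continuous on $X$'' and ``bounded for functions uniformly continuous on $A$''. The implication $(1)\Rightarrow(2)$ is the heart of the matter and I expect it to be the main obstacle; the rest are light bookkeeping.

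First I would handle the easy cycle. The implication $(2)\Rightarrow(1)$ is immediate, since a real-valued uniformly continuous function on $X$ is in particular a uniformly continuous map into $Y=\R$. For $(2)\Rightarrow(3)$: if $d'$ is uniformly equivalent to $d$ on $X$, then $\Id:(X,d)\to(X,d')$ is uniformly continuous, so its image $(X,d')$ restricted to $A$, i.e. $(A,d')$, is bounded — which is exactly $d'$-boundedness of $A$. For $(3)\Rightarrow(1)$ I would argue contrapositively: if some $f\in U(X)$ is unbounded on $A$, then $d'(x,y)=d(x,y)+|f(x)-f(y)|$ is a metric uniformly equivalent to $d$ on $X$ for which $A$ has infinite diameter (this is the standard Hejcman-type trick already invoked in Theorem 1.6). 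The implication $(1)\Rightarrow(4)$ is trivial (a countable subset of a Bourbaki-bounded subset is Bourbaki-bounded), and $(4)\Rightarrow(1)$ follows from characterization (5) of Theorem 1.6: if $A$ were not Bourbaki-bounded, pick $f\in U(X)$ with $f(A)$ unbounded and choose $b_n\in A$ with $|f(b_n)|\to\infty$; then $B=\{b_n:n\in\N\}$ is countable but not Bourbaki-bounded, contradicting $(4)$.

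The substantive equivalences are $(1)\Leftrightarrow(5)$ and $(1)\Leftrightarrow(2)$. For $(5)\Rightarrow(1)$: given $f\in U(X)$ with modulus of continuity witnessed by some $\e>0$ (say $d(x,y)<\e\Rightarrow|f(x)-f(y)|<1$), fix $\gamma<\e$ and take the finite ``centres'' $x_1,\dots,x_{M_1}$ and the chaining length $M_2$ provided by $(5)$; then every point of $A$ is joined to some $x_i$ by a chain of at most $M_2$ steps each of length $\le\gamma<\e$, so $|f(a)|\le \max_i|f(x_i)|+M_2$, hence $f$ is bounded on $A$. For $(1)\Rightarrow(5)$, I would argue by contradiction: if $(5)$ fails for some $\gamma>0$, then for every finite $\{x_1,\dots,x_{M_1}\}$ and every $M_2$ there is a point of $A$ outside $\{x_1,\dots,x_{M_1}\}^{M_2,\gamma}$; a diagonal construction then produces a countable subset $B$ of $A$ on which one can define, via the chaining (graph) distance associated to $\gamma$-balls, an unbounded uniformly continuous function on $X$ — this is precisely the mechanism behind $(6)\Leftrightarrow(1)$ in Theorem 1.6, transplanted to the subset. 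Finally $(1)\Rightarrow(2)$: given uniformly continuous $g:X\to Y$ with $Y$ a metric space, fix a basepoint $y_0\in g(A)$ and set $f(x)=d_Y(g(x),y_0)$; then $f\in U(X)$, so by $(1)$ it is bounded on $A$, which says exactly that $g(A)$ is bounded in $Y$.

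The main obstacle, as flagged, is $(1)\Rightarrow(5)$: one must manufacture an \emph{unbounded} uniformly continuous function on all of $X$ (not merely on $A$) out of the failure of finite chainability \emph{relative to} $X$, and the care needed is that the chaining in $(5)$ uses centres and intermediate points lying in $X$, not in $A$. The clean way around this is to observe that $(5)$ is equivalent to the statement ``for every $\gamma>0$ there is a finite set $S\subset X$ and $M\in\N$ with $A\subset S^{M,\gamma}$'', and that this in turn is equivalent to the corresponding intrinsic finite-chainability of the metric subspace $A^{\gamma/2}$ (or of $A$ itself with the ambient $\gamma$-chaining), so that Theorem 1.6 can be invoked almost verbatim; I would spell out only this reduction and cite Theorem 1.6 for the internal combinatorics.
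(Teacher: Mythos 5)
The paper gives no argument for this theorem at all --- it is declared ``easy to check'' --- so there is nothing to match your proposal against; judged on its own, the easy cycle you set up ($(2)\Rightarrow(1)$; $(2)\Rightarrow(3)$ via the identity map; $(3)\Rightarrow(1)$ via $d'(x,y)=d(x,y)+|f(x)-f(y)|$; $(1)\Leftrightarrow(4)$; $(5)\Rightarrow(1)$ by chaining the modulus of continuity with $\gamma<\e$; and $(1)\Rightarrow(2)$ via $x\mapsto d_Y(g(x),y_0)$) is correct. The gap is exactly where you flagged it, $(1)\Rightarrow(5)$, and your proposed way around it fails. The claimed equivalence of $(5)$ with intrinsic finite chainability of the subspace $A^{\gamma/2}$ is false: let $X$ be the hedgehog obtained by gluing countably many copies of $[0,6]$ at $0$ with the path metric, and let $A$ be the set of tips. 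Then $A\subset\{0\}^{\lceil 6/\gamma\rceil,\gamma}$ for every $\gamma>0$, so $(5)$ holds and $A$ is a Bourbaki-bounded subset; yet $A^{1/2}$ is an infinite union of segments pairwise at distance $\geq 11$, hence not finitely chainable as a space, and Theorem 1.6 hands you $f_0\in U\bigl(A^{1/2}\bigr)$ unbounded on $A$ (say $f_0\equiv n$ on the $n$-th segment). If your reduction were sound, this would contradict $(1)$; it does not, because $f_0$ admits no uniformly continuous extension to $X$. That is the second, independent defect: McShane's lemma (Lemma 2.3 of the paper) extends only \emph{bounded} uniformly continuous functions, and unbounded ones on subspaces need not extend at all, so no invocation of Theorem 1.6 on a subspace can close this implication.

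What is actually needed is the construction carried out directly on $X$ --- precisely the ``chaining distance'' sketch you mention and then abandon. Fix $\gamma$ for which $(5)$ fails and consider the $\gamma/2$-chain components of $X$ (classes of the relation ``joined by a finite chain with steps $\leq\gamma/2$''). If infinitely many components meet $A$, the function equal to $n$ on the $n$-th such component and $0$ elsewhere belongs to $U(X)$, since points at distance $<\gamma/2$ lie in one component, and it is unbounded on $A$. Otherwise only $C_1,\dots,C_r$ meet $A$; fix $x_j\in C_j$ and set $f(x)=\inf\bigl\{\sum_i d(z_{i-1},z_i)\bigr\}$ over $\gamma/2$-chains from $x_j$ to $x$ inside $C_j$, with $f\equiv 0$ off $\bigcup_j C_j$. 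This $f$ is in $U(X)$ because it is $1$-Lipschitz at scales below $\gamma/2$. If $f\leq L$ on $A$, then grouping consecutive links of a chain of total length $\leq L+1$ into blocks of length $\leq\gamma$ (each block, except the last, longer than $\gamma/2$) shows $A\subset\{x_1,\dots,x_r\}^{M,\gamma}$ with $M=\lceil 2(L+1)/\gamma\rceil+1$, contradicting the failure of $(5)$; hence $f$ is unbounded on $A$, contradicting $(1)$. This is the content the paper (following Atsuji, Hejcman, and Garrido--Mero\~{n}o for the subset version) hides behind ``easy to check'', and it is the part your write-up must contain.
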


\begin{example}\label{ejBb} \rm
Consider $B$, the closed unit ball of $l_2$ and $\{e_n:n\in\N\}$ its usual basis. 
Let $X\subset l_2$ be given by $X=B\bigcup\{x_n^m:n\in\N, m=1,\ldots, n\}$, 
where $x_n^m=\left(1+\frac mn\right)e_n$. Then, $X$ is Bourbaki-bounded. 
\end{example}

\begin{proof}
We will show that $X$ is finitely chainable. Let $\gamma>0$ and $k\in\N$ such that 
$\frac 1k \leq \gamma<\frac 1{k-1}$. Then, there are finitely many points in $X$ such that 
$I(x)>\frac 1k$ --namely, $\{x_n^m: n<k, m=1,\ldots, n\}$. So we just need 
to {\em chain} $X_k=\{x\in X:I(x)\leq\frac 1k\}$. 
Now, beginning at the origin 0, we have: 

$A_0=0, A^{1,1/k}=B\left[0,\frac 1k\right], 
A^{2,1/k}=B\left[0, \frac 2k\right], \ldots, A^{k, 1/k}=B\left[0,1\right],
A^{k+1,1/k}=B\cup \left\{x_n^m:d(x_n^m, B)\leq \frac 1k\right\}, $

$A^{k+2, 1/k}=\left\{x_n^m:d(x_n^m, A^{k+1,1/k})\leq \frac 1k\right\}, \ldots, 
A^{3k}=X_k.$

Here, the worst subset to chain is $\{x^m_{2k-1}:m= 1, \ldots, 2k-1\}$, since every 
enlargement gives us just one more point than what we had. So, after $A_k$, we 
still need $2k-1$ enlargements to cover the whole space. In any case, we do not 
need more than $3k$ steps to get to any point in $X_k$ from the origin. 
As $X\setminus X_k$ contains finitely many points, $X$ is Bourbaki-bounded. 
\end{proof}

\begin{remark} \rm
Please note that whenever $X$ is a Bourbaki-bounded space, it is 
Bourbaki-bounded when considered as a subset of another metric space, but 
not every Bourbaki-bounded subset $A\subset X$ is a Bourbaki-bounded space. 
\end{remark}

\begin{example} \rm
Let, again, $\{e_n:n\in\N\}$ be the usual basis of $l_2$. As shown in the 
previous example, it is a Bourbaki-bounded subset of $X$ (every subset is), 
but it is not a Bourbaki-bounded space. 
\end{example}

\section{The main result} 

It is time to state everything properly. 

\begin{nota}
For the sake of clarity, we must explicitly recall the notion of Atsuji 
isolation index: $I(x)=d(x,X\setminus \{x\})=\inf\{d(x,y):y \in X, y\neq x\}$ 
for every $x\in X$. 
\end{nota}

\begin{definition} \rm
$A\subset X$ is uniformly isolated if 
$\inf\{I(a):a\in A\}>0$. This is equivalent to the existence 
of $\e>0$ such that $d(a,x)\geq\e$ for every $a\in A, x\in X$. 
\end{definition}

\begin{lemma}\cite{McS}
For any $A\subset X$ and any $f_0\in U^*(A)$, there exists $f\in U^*(X)$ 
such that $f_{|A}=f_0$. 
\end{lemma}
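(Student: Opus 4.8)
The plan is to use a McShane-type extension formula, which first requires manufacturing a well-behaved modulus of continuity for $f_0$. We may assume $A\neq\emptyset$ (otherwise take $f\equiv0$). Since $f_0\in U^*(A)$, the function $\omega_0(t)=\sup\{|f_0(a)-f_0(b)|:a,b\in A,\ d(a,b)\leq t\}$ is well defined and finite, non-decreasing, bounded by $M:=\sup\{|f_0(a)-f_0(b)|:a,b\in A\}<\infty$, tends to $0$ as $t\to0^+$ by uniform continuity, and satisfies $|f_0(a)-f_0(b)|\leq\omega_0(d(a,b))$ on $A$. If $M=0$ then $f_0$ is constant and there is nothing to prove, so assume $M>0$.

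The key step is to replace $\omega_0$ by a \emph{subadditive} modulus dominating it. I would set $\omega(t)=\inf_{s>0}\left(\omega_0(s)+\frac{M}{s}\,t\right)$ for $t\geq0$. As an infimum of non-decreasing affine functions, $\omega$ is non-decreasing and concave; evaluating at $s=t$ gives $\omega(t)\leq\omega_0(t)+M\leq2M$, so $\omega$ is bounded; and $\omega(0)=\inf_{s>0}\omega_0(s)=0$, while fixing $s$ and letting $t\to0^+$ shows $\omega(t)\to0$, so $\omega$ is continuous at $0$ and is a genuine modulus of continuity. Concavity together with $\omega(0)=0$ forces subadditivity, since $\omega(a)\geq\frac{a}{a+b}\omega(a+b)$ and $\omega(b)\geq\frac{b}{a+b}\omega(a+b)$ add up to $\omega(a)+\omega(b)\geq\omega(a+b)$. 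Finally $\omega\geq\omega_0$: for $s\geq t$ one has $\omega_0(s)\geq\omega_0(t)$, and for $s<t$ one has $\frac{M}{s}t>M\geq\omega_0(t)$, so in either case $\omega_0(s)+\frac{M}{s}t\geq\omega_0(t)$. In particular $|f_0(a)-f_0(b)|\leq\omega(d(a,b))$ on $A$.

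With this in hand I would define $f:X\to\R$ by $f(x)=\inf_{a\in A}\left(f_0(a)+\omega(d(x,a))\right)$. That $f_{|A}=f_0$ follows from two observations: taking $a=x$ gives $f(x)\leq f_0(x)$ for $x\in A$, while for any $a\in A$ the estimate $f_0(a)+\omega(d(x,a))\geq f_0(a)+|f_0(x)-f_0(a)|\geq f_0(x)$ gives the reverse inequality. Boundedness is immediate: $f\geq\inf_A f_0>-\infty$ because $\omega\geq0$, and $f(x)\leq f_0(a_0)+\|\omega\|_\infty$ for any fixed $a_0\in A$. For uniform continuity, fix $x,y\in X$; for every $a\in A$, monotonicity and subadditivity of $\omega$ yield $f_0(a)+\omega(d(x,a))\leq f_0(a)+\omega(d(y,a))+\omega(d(x,y))$, and taking the infimum over $a$ gives $f(x)\leq f(y)+\omega(d(x,y))$; by symmetry $|f(x)-f(y)|\leq\omega(d(x,y))$, which tends to $0$ as $d(x,y)\to0$. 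Hence $f\in U^*(X)$ and $f_{|A}=f_0$.

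The only genuinely delicate point is the construction of the bounded subadditive modulus $\omega$ dominating $\omega_0$; once that is in place, the McShane formula and its verification are routine. One should also check the degenerate cases ($A=\emptyset$, $f_0$ constant), or, to sidestep them uniformly, replace $M$ by $M+1$ throughout, which changes none of the estimates.
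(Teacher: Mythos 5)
Your proof is correct. Note that the paper itself offers no argument for this lemma: it is quoted from McShane's 1934 paper, so there is no internal proof to compare against. What you have written is essentially McShane's own construction, $f(x)=\inf_{a\in A}\bigl(f_0(a)+\omega(d(x,a))\bigr)$, and every step checks out: the inf-convolution $\omega(t)=\inf_{s>0}\bigl(\omega_0(s)+\frac{M}{s}t\bigr)$ is indeed a concave, bounded, non-decreasing majorant of $\omega_0$ vanishing and continuous at $0$; concavity plus $\omega(0)=0$ gives subadditivity; $\omega\geq\omega_0$ guarantees $f_{|A}=f_0$; boundedness of $\omega$ together with boundedness of $f_0$ keeps the infimum finite and $f$ bounded; and subadditivity plus monotonicity yield $|f(x)-f(y)|\leq\omega(d(x,y))$, hence uniform continuity. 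The only genuinely non-routine point is exactly the one you flag---replacing the raw modulus $\omega_0$ by a bounded subadditive one, which is where boundedness of $f_0$ is used (the lemma is false for unbounded $f_0$ in general, e.g.\ $A=\{e_n\}\subset\ell_2$ with $f_0(e_n)=n$)---and you handle it correctly. The degenerate cases $A=\emptyset$ and $M=0$ are also disposed of properly.
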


\begin{remark} \rm
For any couple of sequences $(x_n), (y_n)\subset X$ such that 
$d(x_n, x_m)\geq \e>0, 0<d(y_n, x_n)\leq\min\{\e/3, 1/n\}$ for every 
$n\neq m\in\N$ and any $\alpha_n\to 0$, the function 

$g_0:\{x_n:n\in\N\}\cup\{y_n:n\in\N\}\to \R,\ g_0(x_n)=\alpha_n, g_0(y_n)=0$

\noindent is uniformly continuous. As it is bounded, too, the previous lemma shows that 
we can extend $g_0$ to $g\in U^*(X)$. This extension will be useful in the proof 
of the following result. 
\end{remark}

\begin{theorem}
Let $(X,d)$ be a metric space. Then, $U(X)$ is a ring if and only if every 
non Bourbaki-bounded $A\subset X$ contains an infinite uniformly isolated subset. 
\end{theorem}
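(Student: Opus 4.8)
The plan is to prove the contrapositive in one direction and a direct construction in the other. For the ``only if'' direction, suppose some $A\subset X$ is not Bourbaki-bounded yet contains no infinite uniformly isolated subset. By the characterization of Bourbaki-bounded subsets there is a countable $B=\{b_n:n\in\N\}\subset A$ that is still not Bourbaki-bounded, so there is $f\in U(X)$ with $f(b_n)\to\infty$ (after passing to a subsequence). The idea is to build $g\in U(X)$, bounded, so that $fg\notin U(X)$; then $U(X)$ fails to be a ring. Since $B$ has no infinite uniformly isolated subset, for every $\e>0$ only finitely many $b_n$ satisfy $I(b_n)\ge\e$; hence we may choose, after relabelling, points $y_n$ with $0<d(y_n,b_n)\le\min\{1/n, \text{something small}\}$ and with $f(y_n)$ close to $f(b_n)$ (by uniform continuity of $f$, choosing $d(y_n,b_n)$ small enough forces $|f(y_n)-f(b_n)|<1$). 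We also thin out $B$ so that the $b_n$ are $\e$-separated for a fixed $\e$, unless infinitely many collapse together — but if infinitely many $b_n$ accumulate we can instead replace them by a genuinely separated subsequence while keeping $f$ unbounded. Then set $g_0(b_n)=1/\sqrt{|f(b_n)|}\to 0$, $g_0(y_n)=0$; this $g_0$ is in $U^*(\{b_n\}\cup\{y_n\})$ by the Remark preceding the theorem, so it extends to $g\in U^*(X)$ by the McShane lemma. Now $fg(b_n)=f(b_n)/\sqrt{|f(b_n)|}\to\infty$ while $fg(y_n)\to 0$ and $d(y_n,b_n)\to 0$, so $fg$ is not uniformly continuous, as desired.

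For the ``if'' direction, assume every non Bourbaki-bounded subset of $X$ contains an infinite uniformly isolated subset, and take $f,g\in U(X)$; we must show $fg\in U(X)$. Suppose not: there are sequences $(x_n),(z_n)$ with $d(x_n,z_n)\to 0$ but $|f(x_n)g(x_n)-f(z_n)g(z_n)|\ge\eta>0$. From $fg(x)-fg(z)=f(x)(g(x)-g(z))+g(z)(f(x)-f(z))$ and the uniform continuity of $f,g$, the failure forces at least one of the sets $\{x_n\}$ (hence the nearby $\{z_n\}$) to have unbounded image under $f$ or under $g$; say $A=\{x_n\}$ is not Bourbaki-bounded. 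By hypothesis $A$ contains an infinite uniformly isolated subset $F$, so there is $\delta>0$ with $d(a,x)\ge\delta$ for all $a\in F$, $x\in X$ — but then for $a\in F\cap\{x_n\}$ eventually $d(x_n,z_n)<\delta$ is impossible unless $z_n=x_n$, forcing $fg(x_n)=fg(z_n)$ along that infinite subsequence, contradicting $\eta>0$. The bookkeeping here is the only delicate part: one must argue that the ``bad'' indices $n$ can be pushed into a uniformly isolated part of $A$, i.e.\ that the subset of $A$ responsible for unboundedness is itself non Bourbaki-bounded and therefore subject to the hypothesis. This is handled by noting that a Bourbaki-bounded subset has bounded image under any $f\in U(X)$, so if $f(A)$ is unbounded along $(x_n)$ then $\{x_n:n\in\N\}$ is genuinely non Bourbaki-bounded, and one applies the hypothesis to it.

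I expect the main obstacle to be the ``only if'' direction — specifically, the simultaneous arrangement that (a) $f$ stays unbounded along the chosen subsequence of $b_n$, (b) the $b_n$ can be taken pairwise $\e$-separated for a fixed $\e$ (so that the Remark's construction applies), and (c) the companion points $y_n$ exist with $d(y_n,b_n)\to 0$. Point (b) is exactly where ``$A$ has no infinite uniformly isolated subset'' must be converted into a usable statement: it does \emph{not} directly give $\e$-separation of the $b_n$ themselves, only that the $I(b_n)$ do not stay bounded below. So the argument should instead be: either infinitely many $b_n$ have $I(b_n)\ge\e$ for some fixed $\e$ — impossible, since that would be an infinite uniformly isolated subset of $A$ — or $I(b_n)\to 0$ along every subsequence, which is precisely what lets us pick the $y_n$ close to $b_n$; separation of the $b_n$ is then arranged separately by passing to a subsequence spread out using the unbounded function $f$ itself (points with very different $f$-values are automatically far apart). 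Making this last trade-off precise, and checking that $g_0$ as defined is genuinely uniformly continuous on $\{b_n\}\cup\{y_n\}$ with the chosen separations, is the technical heart of the proof.
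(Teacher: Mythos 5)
Your proposal is correct and follows essentially the same route as the paper: the ``if'' direction is identical (unboundedness of $f$ or $g$ on $\{x_n\}\cup\{z_n\}$ together with $I(x_n)\le d(x_n,z_n)\to 0$ contradicts the hypothesis), and the ``only if'' direction uses the same skeleton --- an unbounded $f$ yielding $\e$-separated points $b_n$, companion points $y_n$ with $d(b_n,y_n)\to 0$ supplied by the absence of an infinite uniformly isolated subset, and a McShane extension of a bounded function vanishing on the $y_n$. The only (harmless, arguably cleaner) deviation is your witness: taking $g(b_n)=1/\sqrt{|f(b_n)|}$ and showing $f\cdot g\notin U(X)$ directly avoids the paper's case split on whether $f(x_n)\le f(y_n)$ or $f(x_n)\ge f(y_n)$ holds infinitely often in its computation with $(f\mp h)^2$, where $h(x_n)=1/n$.
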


\begin{proof}
{\bf The ``only if" implication:} 

\vspace{1 mm}

Suppose, on the contrary, that there exists a non Bourbaki-bounded $A\subset X$ 
such that for every $\delta>0$, $\{x\in A:I(x)\geq\delta\}$ is finite. Now, take 
$f\in U(X)$ unbounded on $A$ and $(x_n)\subset X$ such that $f(x_n)\geq f(x_{n-1})+1\geq n$, for 
every $n\in\N$. As $|f(x_n)-f(x_m)|\geq 1$ for every $m\neq n\in\N$ and $f$ is 
uniformly continuous, there exists $\e$ such that $d(x_n,x_m)\geq\e$ when $m\neq n$. 
As $I(x_n)$ tends to 0, we may take another sequence $(y_n)\subset X$ such that 
$\frac \e 3\geq d(x_n, y_n)\to 0$. The function 
$h_0:\{x_n:n\in\N\}\cup\{y_n:n\in\N\}\to [0,1],$ defined by $h_0(x_n)=\frac 1n, h_0(y_n)=0$ 
is uniformly continuous and bounded. So, we may extend $h_0$ to $h\in U^*(X)$ and 
we have two options: 

\begin{itemize}
\item $f(x_n)\leq f(y_n)$ for infinitely many $n$. 
\item $f(x_n)\geq f(y_n)$ for infinitely many $n$. 
\end{itemize}

In the first case, consider $f-h\in U(X)$. A simple calculation shows that its 
square cannot be uniformly continuous: 

\vspace{2 mm}

$$\left((f-h)^2\right)(x_n)-\left((f-h)^2\right)(y_n)=$$
$$=f^2(x_n)+h^2(x_n)-2f(x_n)h(x_n)-\left[f^2(y_n)+h^2(y_n)-2f(y_n)h(y_n)\right]=$$
$$=f^2(x_n)+ \frac 1{n^2}-2\frac 1nf(x_n) -f^2(y_n)\leq \frac 1{n^2}-2\leq -1$$ 
for infinitely many $x_n, y_n$ such that $d(x_n, y_n)\leq \frac 1n$, so 
$(f-h)^2$ is not uniformly continuous. In the second case, it is enough to 
consider $f+h$ instead of $f-h$ and we have showed that $U(X)$ is not a ring. 
\vspace{3 mm}

{\bf The ``if" implication:} 

\vspace{1 mm}

Suppose $U(X)$ is not a ring. Then, there exist $f, g\in U(X)$ such that 
$f\cdot g\not\in U(X)$, so there are $\e>0$ and sequences $(x_n), (y_n) 
\subset X$ such that $d(x_n, y_n)\leq \frac 1n$ and 
$|(f\cdot g)(x_n) - (f\cdot g)(y_n)|\geq\e$. As $f\cdot g$ is uniformly 
continuous whenever $f, g\in U^*(X)$, this implies that either $f$ or $g$ is 
unbounded on $A=\{x_n:n\in\N\}\cup\{y_n:n\in\N\}$. As both $I(x_n), I(y_n)$ are 
not greater than $\frac 1n$ because $d(x_n, y_n)\leq \frac 1n$, $A$ is a non 
Bourbaki-bounded subset such that $A\cap\{x:I(x)>\delta\}$ is finite for 
every $\delta>0$. 
\end{proof}

\begin{corollary}
Suppose there exists a Bourbaki-bounded subset $F\subset X$ such that 
$X\setminus (F^\gamma)$ is uniformly isolated for every $\gamma>0$. Then, 
$U(X)$ is a ring. 
\end{corollary}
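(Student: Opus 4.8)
The plan is to reduce this to the Theorem: it suffices to prove that, under the hypothesis, \emph{every} non Bourbaki-bounded $A\subset X$ contains an infinite uniformly isolated subset. So I would fix such an $A$ and, for each $\gamma>0$, split it as $A=(A\cap F^\gamma)\cup(A\setminus F^\gamma)$. Since $A\setminus F^\gamma\subset X\setminus F^\gamma$ and the latter is uniformly isolated by hypothesis, $A\setminus F^\gamma$ is uniformly isolated too (any subset of a uniformly isolated set is uniformly isolated, directly from $I(x)=d(x,X\setminus\{x\})$).

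The easy case is immediate: if $A\setminus F^\gamma$ is infinite for some $\gamma>0$, then it is an infinite uniformly isolated subset of $A$, as required. So the real content is to show that the complementary case cannot occur for a non Bourbaki-bounded $A$; that is, I will prove that if $A\setminus F^\gamma$ is finite for \emph{every} $\gamma>0$, then $A$ is itself Bourbaki-bounded, contradicting the choice of $A$.

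To obtain that contradiction I would check that $A$ is finitely chainable, i.e. that for every $\gamma>0$ it is contained in some iterated $\gamma$-neighbourhood $\{x_1,\ldots,x_{M_1}\}^{M_2,\gamma}$ of a finite set (condition (5) in the characterization of Bourbaki-bounded subsets). Fix $\gamma>0$. Since $F$ is a Bourbaki-bounded subset, that same condition applied to $F$ yields $M_1,M_2\in\N$ and $x_1,\ldots,x_{M_1}\in X$ with $F\subset\{x_1,\ldots,x_{M_1}\}^{M_2,\gamma}$; applying one further $\gamma$-enlargement and using monotonicity of $S\mapsto S^{1,\gamma}$ in the seed set $S$ gives $F^\gamma=F^{1,\gamma}\subset\{x_1,\ldots,x_{M_1}\}^{M_2+1,\gamma}$, so $A\cap F^\gamma$ is covered. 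By the case assumption $A\setminus F^\gamma$ is a finite set $\{a_1,\ldots,a_k\}$, and each $a_i$ lies in $\{a_1,\ldots,a_k\}^{M_2+1,\gamma}$. Combining the two pieces, and using $S^{k,\gamma}\cup(S')^{k,\gamma}\subset(S\cup S')^{k,\gamma}$ together with monotonicity in the number of steps, we get $A\subset\{x_1,\ldots,x_{M_1},a_1,\ldots,a_k\}^{M_2+1,\gamma}$. As $\gamma>0$ was arbitrary, $A$ is finitely chainable, hence Bourbaki-bounded --- the contradiction we wanted, after which the Theorem closes the argument.

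I do not foresee any serious obstacle; the only thing requiring a little care is the bookkeeping with the iterated neighbourhoods $A^{k,\gamma}$ (monotonicity in both the seed set and the number of enlargements, and the single extra enlargement step that passes from $F$ to $F^\gamma$), all of which is routine from the inductive definition of $A^{k,\gamma}$.
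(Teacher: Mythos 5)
Your proof is correct, but it takes the route the paper only alludes to and does not write out. The paper remarks that the corollary ``can be easily deduced from the above theorem'' and then deliberately gives a different, self-contained argument: it shows directly that $f\cdot g$ is uniformly continuous, by first bounding $f$ and $g$ on $F$ (Bourbaki-boundedness), paying one uniform-continuity step to extend those bounds to $F^\gamma$, observing that the uniform isolation of $X\setminus F^\gamma$ forces any pair $x,y$ with $d(x,y)$ small enough to lie jointly in $F^\gamma$, and then running the usual $|f(x)g(x)-f(y)g(y)|\leq |g(x)||f(x)-f(y)|+|f(y)||g(x)-g(y)|$ estimate. You instead verify the hypothesis of the main theorem: for a non Bourbaki-bounded $A$, either $A\setminus F^\gamma$ is infinite for some $\gamma$ (and then it is an infinite uniformly isolated subset, since subsets of uniformly isolated sets are uniformly isolated), or $A\setminus F^\gamma$ is finite for all $\gamma$, in which case your chaining argument --- seeding with the finite chain for $F$, one extra $\gamma$-enlargement to absorb $F^\gamma$, and the finitely many leftover points added to the seed --- shows $A$ satisfies condition (5) and is Bourbaki-bounded, a contradiction. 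Both arguments are sound; yours is shorter granted the theorem and makes precise the deduction the paper waves at, while the paper's version is independent of the theorem and exhibits the quantitative modulus of continuity of the product explicitly.
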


\begin{proof}
It can be easily deduced from the above theorem, but we will give an alternative 
proof: 
we will show directly that $f\cdot g$ is uniformly continuous. 

Suppose $F$ is Bourbaki-bounded and such that $\{x\in X:d(x,F)\geq\gamma\}$
is uniformly isolated fo every $\gamma$ and let $f, g\in U(X)$. Then, since
$F$ is Bourbaki-bounded, there exist $M, N\in \N$ such that $f_{|F}\leq M$
and $g_{|F}\leq N$. As both functions are uniformly continuous, there exist
$\gamma_f, \gamma_g$ such that $d(x,y)<\gamma_f$ implies $|f(x)-f(y)|<1$ and
$d(x,y)<\gamma_g$ implies $|g(x)-g(y)|<1$. So, if we take
$\gamma=\min\{\gamma_f, \gamma_g\}$, we have $\sup\{f(x):x\in F^\gamma\}\leq M+1$
and $\sup\{g(x):x\in F^\gamma\}\leq N+1$.

Now, there exists $\alpha>0$ such that, $x\not\in F^{\gamma}$ implies 
$I(x)\geq\alpha$ so, whenever $d(x,y)<\alpha$, both $x$ and $y$ must belong to $F^{\gamma}$.

We need to show that for every $\e>0$ exists $\delta$ such that $d(x,y)<\delta$
implies $|g(x)f(x)-g(y)f(y)|<\e$. So let $\e>0$ and take $\delta_f, \delta_g$
such that $d(x,y)<\delta_f$ implies $|f(x)-f(y)|<\frac\e{2(M+1)}$ and
$d(x,y)<\delta_g$ implies $|g(x)-g(y)|<\frac\e{2(N+1)}$. Now, taking
$\delta=\min\left\{\alpha, \delta_f, \delta_g\right\}$, we obtain,
for $x, y$ such that $d(x,y)<\delta$:

\vspace{3 mm}
$|f(x)g(x)-f(y)g(y)|=|f(x)g(x)-f(y)g(x)+f(y)g(x)-f(y)g(y)|\leq
|f(x)g(x)-f(y)g(x)|+|f(y)g(x)-f(y)g(y)|=$

\vspace{3 mm}

$=|g(x)(f(x)-f(y))|+ |f(y)(g(x)-g(y))|\leq (M+1)\frac\e{2(M+1)} + (N+1)\frac\e{2(N+1)}=\e,$
and so, $fg\in U(X)$.
\end{proof}

\begin{question}
Is this an equivalence? 
\end{question}

\section*{Acknowlegments}
The author feels deeply grateful to Prof.~Maribel Garrido, for several useful 
discussions, references and advices on this topic.


\begin{thebibliography}{33}

\bibitem{A}
M. Atsuji, Uniform continuity of continuous functions of metric spaces. 
Pac. J. Math. 8 (1958), 11--16.

\bibitem{BN}
G. Beer, S. Naimpally, Uniform continuity of a product of real functions. 
Real Anal. Exch. 37, no. 1 (2011/2012) 213--220. 

\bibitem{C}
J. Cabello, A sharp representation of multiplicative isomorphisms of 
uniformly continuous functions, submitted. 


\bibitem{GM}
M.I. Garrido, A.S. Mero\~no, New types of completeness in metric spaces. 
Annales Academi\ae\ Scientiarum Fennic\ae\ 39, no. 2 (2014) 733--758. 

\bibitem{H}
J. Hejcman, Boundedness in uniform spaces and topological groups.  
Czechoslovak Math.J. 9, 1959, 544--563.

\bibitem{McS}
E.J. McShane, Extension of range of functions.
Bull. Amer. Math. Soc. 40, 1934, 837--842. 

\bibitem{N}
S.B. Nadler, Pointwise products of real uniformly continuous functions. 
Sarajevo J. Math 1, no. 1 (2005) 117--127. 

\bibitem{NZ}
S.B. Nadler, D.M. Zitney, Pointwise products of uniformly continuous 
functions on sets in the real line. 
American Mathematical Monthly 114, no. 2 (2007) 160--163. 

\bibitem{Nj}
O. Nj\aa stad, On uniform spaces where all uniformly continuous functions are bounded. 
Montsh. Math. 69 (1965) 167-176. 

\end{thebibliography}
\end{document}